\newif\ifshowvc
\def\blfootnote{\xdef\@thefnmark{}\@footnotetext}
\theoremstyle{plain}
\newtheorem{theorem}{Theorem}[section]
\newtheorem{proposition}[theorem]{Proposition}
\newtheorem{lemma}[theorem]{Lemma}
\newtheorem{conj}[theorem]{Conjecture}
\theoremstyle{definition}
\newtheorem{definition}[theorem]{Definition}
\theoremstyle{remark}
\newtheorem{remark}[theorem]{Remark}
\newtheorem{example}[theorem]{Example}
\newcommand{\beq}{\begin{equation*}}
\newcommand{\eeq}{\end{equation*}}
\newcommand{\mb}[1]{\mathbf{#1}}
\newcommand{\BBB}{\mathscr{B}}
\newcommand{\OOO}{\mathscr{O}}
\newcommand{\x}{\mathbf{x}}
\newcommand{\z}{\mathbf{z}}
\newcommand{\p}{\varpi }
\newcommand{\C}{\mathbb{C}}
\newcommand{\R}{\mathbb{R}}
\newcommand{\Z}{\mathbb{Z}}
\newcommand{\fg}{\mathfrak{g}}
\newcommand{\ul}[1]{\underline{#1}}
\renewcommand{\hat}{\widehat}
\DeclareMathOperator{\GL}{GL}
\DeclareMathOperator{\sign}{sgn}
\DeclareMathOperator{\bx}{\mathtt{bx}}
\begin{document}

\title[Crystal graphs and $G_2$]{Crystal graphs, Tokuyama's theorem,
and the Gindikin--Karpelevi\v c formula for $G_{2}$}
\author[H. Friedlander]{Holley Friedlander}
\address{Department of Mathematics and Statistics\\
Williams College\\
Williamstown, MA 01267}
\email{hf2@williams.edu}

\author[L. Gaudet]{Louis Gaudet}
\address{Department of Mathematics\\
Yale University\\
New Haven, CT}
\email{louis.gaudet@yale.edu}

\author[P. E. Gunnells]{Paul
E. Gunnells}
\address{Department of Mathematics and Statistics\\
University of Massachusetts\\
Amherst, MA  01003}
\email{gunnells@math.umass.edu}

\thanks{HF and PG thank the NSF for support.  LG thanks the Yale
Mathematics Department for support.}

\subjclass[2010]{Primary 17B10; Secondary 11F68, 20C15, 05E15}

\begin{abstract}
We conjecture a deformation of the Weyl character formula for type $G_{2}$
in the spirit of Tokuyama's formula for type $A$.  Using our
conjecture we prove a combinatorial version of the
Gindikin--Karpelevi\v c formula for $G_{2}$, in the spirit of
Bump--Nakasuji's formula for type $A$.
\end{abstract}

\maketitle
\ifshowvc
\blfootnote{Base revision~\GITAbrHash, \GITAuthorDate, \GITAuthorName.}
\fi

\section{Introduction}\label{s:intro}

Let $\fg$ be a simple complex Lie algebra, let $\Lambda_{W}$ be its
weight lattice, and let $\C [\Lambda_{W}]$ be the associated ring of
Laurent polynomials.  Let $W$ be the Weyl group of $\fg$ and for any
$w\in W$ let $\sign w \in \{\pm 1 \}$ be its sign.  Given a dominant
weight $\theta \in \Lambda_{W}$, let $V_{\theta}$ be the irreducible
representation of highest weight $\theta$.  The Weyl character formula
expresses the character $\chi_{\theta} \in \C [\Lambda_{W}]$ as a ratio
of two polynomials:
\begin{equation}\label{eq:wcf0}
\chi_{\theta} (\x) = \frac{\sum_{w\in W} (\sign w) \x^{w (\theta +\rho)-\rho
}}{\prod_{\alpha > 0} (1-\x^{-\alpha})}.
\end{equation}
Here the product is taken over the positive roots $\alpha$, the Weyl
vector $\rho$ is $\frac{1}{2}\sum_{\alpha >0} \alpha $, and for any weight
$\beta$ we denote by $\x^{\beta}$ the corresponding monomial in $\C
[\Lambda_{W}]$.

We can define a deformation of \eqref{eq:wcf0} by inserting a
parameter into the denominator.  Let $q$ be a variable and put
\[
D (\x) = \prod_{\alpha >0} (1-q^{-1}\x^{-\alpha}).  
\]
Then the product 
\[
N_{\theta} (\x) = \chi_{\theta} (\x ) D (\x) 
\]
is a polynomial supported in the convex hull of the
weights of the representation $V_{\theta +\rho}$.  When $\fg$ has type
$A$, Tokuyama \cite{tokuyama} showed how to compute $N_{\theta} (\x)$
explicitly as a sum over the Gel$'$fand--Cetlin basis of $V_{\theta
+\rho}$.  His formula has recently played an imporant role in the
study of Weyl group multiple Dirichlet series.  These are series in
several complex variables built from data attached to root systems;
each has a group of functional equations isomorphic to the Weyl group
of the root system that intermixes all the variables.  Such series are
related to $p$-adic Whittaker functions and in fact are conjectured
to be Fourier--Whittaker coefficients of certain Eisenstein series on
metaplectic groups (finite central covers of reductive groups).  We
refer to \cite{bump.intro} for more information about this connection.

Tokuyama's formula has been generalized to other root systems with
various combinatorial tools.  For instance Hamel--King
\cite{hamel.king} gave a generalization to $\fg$ of type $C$, in which
the Gel$'$fand--Cetlin basis was replaced by symplectic shifted
tableaux.  Conjectural generalizations to $\fg$ of types $B$, $D$ were
given in \cite{spincrystal, d4conjecture, brconjecture}; recently the
case of type $B$ was proved by Friedberg--Zhang \cite{fz}.  For
arbitrary $\Phi$, the most general result is due to McNamara
\cite{mcnamara}, who showed how $p$-adic Whittaker functions can be
computed as sums over crystal graphs.(\footnote{Another approach also
valid for an arbitrary Cartan--Killing type has been presented by
Kim--Lee \cite{kim.lee}, who compute $N_{\theta } (\x)$ as a sum over
weights of $V_\theta \otimes V_{\rho}$.})  When $\fg$ is type $A$, the
sums can be taken over Gel$'$fand--Cetlin patterns and computed
explicitly, and McNamara recovers Tokuyama's theorem.  However, apart
from this case, McNamara's formulas have not been explicitly computed
for any other type.

In this paper, we present a conjectural analogue of Tokuyama's theorem
when $\fg$ has type $G_{2}$ (Conjecture \ref{conj:g2conj}).  We
describe how to compute the polynomial $N_{\theta} (\x)$ as a sum over
certain weight vectors in $V_{\theta+\rho }$.  As a combinatorial
model for this representation, we use patterns due to Littelmann
\cite{littelmann}; when $\fg$ has type $A$, these are equivalent to
Gel$'$fand--Cetlin patterns.  Although we are unable to prove our
conjecture, we are able to treat the limiting case that the highest
weight becomes infinite.  In this case our formula (Theorem
\ref{thm:G2GK}) becomes a combinatorial version of the
Gindikin--Karpelevi\v c formula \cite{langlands.eulerproducts}, in the
spirit of that proved by Bump--Nakasuji \cite{bn}.
 
\section{Background and the Tokuyama numerator}

In this section we state Tokuyama's formula for characters of
representations of $\GL_{r+1}$ and explain the connection to crystal
graphs.  We begin by describing what a formula of
``Tokuyama-type'' looks like.  We will use slightly different
normalizations from \S \ref{s:intro}: in particular we will shift our
characters so that they are supported on the root lattice, and will
index representations by lowest weights.  These conventions are
somewhat unusual from the point of view of combinatorial
representation theory, but they are more natural when one connects
these constructions to $p$-adic Whittaker functions.

As before let $\fg$ be a simple complex Lie algebra of rank $r$.  Let
$\Phi$ be the root system of $\fg$ and $\Phi^{+} \cup \Phi^{-}$ the
partition into positive and negative roots, and $\Delta =
\{\alpha_{1},\dotsc ,\alpha_{r}\}$ the simple roots.  Let
$\varpi_{1},\dotsc ,\varpi_{r}$ be the fundamental weights and $\rho =
\frac{1}{2}\sum_{\alpha >0}\alpha = \sum \varpi_{i}$ . Let $W$ be the
Weyl group of $\Phi$ with simple reflections $s_{1},\dotsc ,s_{r}$.

We let $\Lambda$ be the lattice generated by the roots and $\C
[\Lambda] $ the ring of Laurent polynomials determined by $\Lambda$.
Given $\lambda \in \Lambda$, let $\x^{\lambda}\in \C [\Lambda]$ be the
corresponding monomial.  We may identify $\C [\Lambda]$ with $\C
[x_{1}^{\pm 1}, \dotsc , x_{r}^{\pm 1}]$ via $\x^{\alpha_{i}} \mapsto
x_{i}$.  Let $\Lambda^{+}\subset \Lambda$ be the cone generated by the
positive roots (the codominant cone).

Let $q$ be a parameter.  We define the Weyl denominator by 
\[
\Delta (\x) = \prod_{\alpha > 0} (1- \x^{\alpha})
\]
(note the use of $\x^{\alpha}$, not $\x^{-\alpha}$) and a deformation $D(\x)$
of $\Delta (\x )$ by
\begin{equation}\label{eqn:def}
D (\x) = \prod_{\alpha >0} (1- q^{-1}\x^{\alpha}).
\end{equation}

Let $\theta$ be a dominant weight and let $V_{\theta}$ be the
irreducible representation of $\fg$ with \emph{lowest weight
$-\theta$}.(\footnote{For many root systems, including $G_{2}$, the
representation $V_{\theta}$ as defined coincides with the
representation with highest weight $\theta$.  For some, such as type
$A$, they differ.  This choice means that certain changes have to be
made when comparing results we cite below with the original sources.})
Let $\chi_{\theta}$ be the character of $V_{\theta}$.  As in \S
\ref{s:intro}, the character $\chi_{\theta}$ is most properly thought
of as an element of the group ring of the weight lattice, but we
modify $\chi_{\theta}$ to be an element of $\C [\Lambda]$ by shifting
so that the term for the lowest weight is supported on the monomial
$\x^{0}\in \C [\Lambda]$; by abuse of notation we denote the resulting
polynomial in $\C [\Lambda]$ also by $\chi_{\theta}$.  With this
convention, the support of $\chi_{\theta}$ is contained in the
codominant cone $\Lambda^{+}$, and $\chi_{\theta}$ is actually a
polynomial under the identification $\C [\Lambda] \simeq \C
[x_{1}^{\pm 1},\dotsc ,x_{r}^{\pm 1}]$.  For example, if $\Phi =
A_{2}$ and $\theta = \varpi_{2}$, then $V_{\theta}$ is the standard
representation.  If we write $x = \x^{\alpha_{1}}, y=\x^{\alpha_{2}}$,
then $\chi_{\theta }=1+y+xy$.  Similarly if $\theta = \rho$, then
$V_{\theta}$ is the adjoint representation, and $\chi_{\theta} =
1+x+y+2xy+x^{2}y+y^{2}x+x^{2}y^{2}$.

\begin{definition}\label{def:toknum}
Let $V_{\theta}$ be an irreducible representation and let
$\chi_{\theta} (\x )$ be its character as above.  Then the
\emph{Tokuyama numerator} $N_{\theta} (\x) \in \C[q^{-1}] [\Lambda ]$
is the polynomial $N_{\theta} (\x) = \chi_{\theta} (\x) D(\x)$.
\end{definition}

Note that if $q=1$, then $D (\x) = \Delta (\x)$, and then by
\eqref{eq:wcf0} $N_{\theta} (\x)$ is a sum of signed monomials indexed
by the Weyl group $W$.  In general $N_{\theta} (\x)$ is a polynomial
supported on monomials $\x^{\beta}$ with $\beta$ a weight of
$V_{\theta +\rho}$.  When $\Phi = A_{r}$, Tokuyama showed how to write
$N_{\theta} (\x)$ as a sum over certain weights in the representation
$V_{\theta +\rho}$, and thus gave an explicit expression for the
numerator $N_{\theta} (\x)$ (cf.~Theorem \ref{thm:tok}).  The goal of this
paper is to give an explicit conjectural formula for the numerator
when $\Phi = G_{2}$.
 
\section{Crystal graphs and Littelmann Patterns} 

Recall that $\fg$ is a simple complex Lie algebra with root system
$\Phi$, $\theta$ is a dominant weight, and $V_{\theta}$ is the
irreducible representation of lowest weight $-\theta$.  Littelmann
patterns \cite{littelmann} provide a combinatorial way to index a
basis of $V_{\theta}$.  For instance when $\Phi = A_{r}$, Littelmann
patterns are essentially the famous Gel$'$fand--Cetlin patterns that
encode branching rules for $SL_{n}$ \cite{gt}.  In this section we
recall how to construct Littelmann patterns, with an emphasis on
$G_{2}$.

Littelmann patterns encode weight vectors of $V_{\theta}$ by
extracting data from the crystal graph $\BBB (\theta)$, so we begin by
discussing the latter.  We will not need much about crystal graphs and
refer to \cite{kashiwara} for a survey of their properties.  For our
purposes, we only need to know that $\BBB(\theta)$ is a finite
directed graph with edges colored by the simple roots $\Delta$.  The
vertices of $\BBB(\theta)$ are in bijection with certain weight
vectors in $V_\theta$; for $v\in\BBB(\theta)$ we write $v\mapsto \bar
v$.  Under this bijection, if there is an edge $v\rightarrow v'$
labelled by $\alpha \in \Delta$, then the weight of $\bar v$ is that
of $\bar v'$ plus $\alpha$.  Thus the edges correspond to the lowering
operators acting on $V_\theta$.  If we let $\theta \to \infty$, we
obtain an infinite graph $\BBB(\infty)$.  All the graphs
$\BBB(\theta)$ appear as subgraphs of $\BBB(\infty)$.

Now choose a reduced expression for the longest Weyl word $w_{0}$.
Littelmann proved that one can find a rational polyhedral cone
$C_{\infty}\subset \R^{N}$, where $N=|\Phi^{+}|$, such that the
lattice points $C_{\infty }\cap \Z^{N}$ are in bijection with the
vertices of $\BBB (\infty)$.  The inequalities defining the cone
$C_{\infty}$ depend only on $w_{0}$.  Furthermore, after choosing a
dominant weight $\theta$, one can find a second set of rational
inequalities depending on $\theta$ and $w_{0}$, such that if
$C_{\theta}\subset C_{\infty}$ denotes the corresponding cone, then
the lattice points $C_{\theta}\cap \Z^{N}$ are in bijection with the
vertices of $\BBB (\theta )$.  Finally he showed how to index these
lattice points using tables of nonnegative integers that record the
structure of certain paths in the crystal graph $\BBB (\theta)$.
These tables are the \emph{Littelmann patterns}; rather than giving
their definition in full generality, we explain how they work for
$G_{2}$ below and refer to \cite{littelmann} for more details.  Given
a Littelmann pattern $\pi$, we abuse notation and write $\pi \in \BBB
(\theta)$ to indicate that $\pi$ encodes a lattice point in
$C_{\theta}$ indexing a vertex of $\BBB (\theta)$.

We now specialize to $\Phi = G_{2}$.  The root system is shown in
Figure \ref{fig:g2} in \S\ref{s:gk}; we have $|\Phi^{+}| = 6$, and the
simple roots are $\alpha_{1},\alpha_{2}$.  The Weyl group has order
$12$ and the longest word $w_{0}$ has length $6$.  If we denote the
simple reflection corresponding to the simple root $\alpha_{i}$ by
$s_{i}$, then there are two reduced expressions for the longest word:
$s_{1}s_{2}s_{1}s_{2}s_{1}s_{2}$ and $s_{2}s_{1}s_{2}s_{1}s_{2}s_{1}$.
We will use the second expression.  A Littelmann pattern for $G_2$
then has the form
\begin{equation}\label{eq:pattern}
\left[\begin{array}{ccccc}
a&b&c&d&e\\
&&f&&\\
\end{array} \right]
\end{equation}
where $a,\dotsc ,f$ are integers, called the \emph{entries} of the
pattern.  To simplify notation, we usually write 
\begin{equation}\label{eq:patternshort}
[a,b,c,d,e][f]
\end{equation}
for
\eqref{eq:pattern}.

As described above, the entries are subject to certain inequalities
determined by our choice of reduced expression for $w_{0}$ and by the
highest weight $\theta$.  The first set of inequalities, which defines
the infinite cone $C_{\infty}$, gives lower bounds on the entries of a
pattern: we have
\begin{equation}\label{eq:circling}
2a\ge2b\ge c\ge2d\ge2e\ge0, \quad f\ge0. 
\end{equation}
We call these the \emph{circling}(\footnote{The terminology circling
and boxing comes from \cite{wmd5book}.}) inequalities; if any of these
is not strict, then we circle the entry in \eqref{eq:patternshort}
that appears on the left side of the corresponding inequality. Thus
$e$ and $f$ are circled if they vanish, $d$ is circled if it equals
$e$, and so on.  We indicate circling of an entry $u$ by a circle
superscript: $u^{\circ}$.

The second set of inequalities, which together with
\eqref{eq:circling} defines $C_{\theta}$, depends on the weight
$\theta $ and provides upper bounds on pattern entries.  Write $\theta
= \ell_ 1\varpi_1 + \ell_{2}\varpi_{2}$.  Then the entries must
satisfy
\begin{multline}\label{eq:boxing}
e \le \ell_1,
d \le \ell_2+e,
c \le \ell_1+3d-2e,
b \le \ell_2+c-2d+e,\\
a \le \ell_1+3b-2c+3d-2e,
f \le \ell_2+a-2b+c-2d+e.
\end{multline}
We say that an entry $u$ is boxed, denoted $\underline{u}$, if it
reaches its upper bound in \eqref{eq:boxing}. Thus we write
$\underline{e}$ if $e=\ell_1$, $\underline{d}$ if $d=\ell_2+e$, and so
on. To ease notation we sometimes give the boxing for a pattern in the
form of a pattern itself with entries restricted to $0$ and $1$ and
prefixed by $\bx$.  In such a pattern a 1 indicates that the
corresponding entry in the Littelmann pattern should be boxed, and $0$
indicates it should be unboxed.  For instance, the notation
$\bx[0,1,0,1,0][1]$ means a Littelmann pattern of the form
$[a,\underline{b},c,\underline{d},e][\underline{f}]$.

Each pattern $\pi$ determines a monomial $\x^{\pi}\in \C [\Lambda]
\simeq \C [x^{\pm 1},y^{\pm 1}]$: if $\pi=[a,b,c,d,e][f]$, then
$\x^{\pi} = x^{a+c+e}y^{b+d+f}$ (the variable $x$ corresponds to the
short simple root).  The pattern also determines a polynomial $H
(\pi)$ in $q^{-1}$:
\begin{definition}\label{def:stdcontrib}
Let $\pi$ be a boxed and circled Littelmann pattern.  Then the
\emph{standard contribution} $H (\pi)\in \Z [q^{-1}]$ of $\pi$ is
defined to be $H (\pi) = \prod_{u\in \pi} h (u)$, where the product is
taken over the entries $u$ of $\pi$, and
\[
h(u) = \left\{
\begin{array}{l l}
0 & \quad \text{if $u$ is both boxed and circled ($\underline{u}^\circ$),} \\
1& \quad \text{if $u$ is not boxed and is circled ($u^\circ$),} \\
-1/q & \quad \text{if $u$ is boxed and is not circled ($\underline{u}$),} \\
(1-1/q) & \quad \text{if $u$ is neither boxed nor circled ($u$).}
\end{array} \right. 
\]
\end{definition}

We call the function $H (\pi )$ the standard contribution of a boxed
and circled pattern $\pi$ because that is what a pattern contributes
in Tokuyama's original formula \cite{tokuyama}.  We state this formula
here for the convenience of the reader, and thus for the moment let
$\Phi$ be the root system $A_{r}$.  Fix a dominant weight $\theta =
\sum \ell_{i}\varpi_{i}$ and define $\chi_{\theta}$ as above.  The
reduced expression $w_{0}= s_{1} (s_{2}s_{1}) (s_{3}s_{2}s_{1}) \dotsb
$ determines a collection of circling and boxing inequalities; we
refer to \cite[Theorem 5.1, Corollary 1]{littelmann} for a complete
description (cf.~Example \ref{ex:a2}).  A pattern $\pi$ determines a
monomial $\x^{\pi}$, and we have the following theorem:
\begin{theorem}\label{thm:tok}
For $\Phi = A_{r}$ and with the standard contributions in Definition
\ref{def:stdcontrib}, we have
\begin{equation}\label{eq:tokstatement}
N_{\theta} (\x)= \chi_{\theta} (\x)D(\x) =\sum_{\pi \in \BBB (\theta +\rho)} H (\pi)\x^{\pi}.
\end{equation}
\end{theorem}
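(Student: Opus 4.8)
The plan is to prove Tokuyama's theorem by induction on the rank $r$, exploiting the fact that Littelmann patterns for $A_r$ with the reduced word $w_0 = s_1(s_2 s_1)(s_3 s_2 s_1)\dotsb$ stratify by their "bottom row," which records a Gel$'$fand--Cetlin-type branching from $\GL_{r+1}$ down to $\GL_r$. Concretely, one first checks the base case $r = 1$ by hand: here $\BBB(\theta + \rho)$ is a single string of length $\ell_1 + 1$, and the right side of \eqref{eq:tokstatement} telescopes to $(1 - q^{-1}x)\chi_\theta(\x)$, which is exactly $N_\theta(\x)$ by Definition \ref{def:toknum}. For the inductive step, I would partition the patterns $\pi \in \BBB(\theta + \rho)$ according to the sub-pattern $\pi'$ obtained by deleting the top row (the entries bounded using $\ell_1$); the entries of $\pi'$ satisfy the circling and boxing inequalities of a Littelmann pattern for $A_{r-1}$ with a shifted highest weight $\theta'$ determined by the deleted entries. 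This is the combinatorial shadow of the branching rule $V_{\theta+\rho}^{\GL_{r+1}} = \bigoplus_{\mu} V_\mu^{\GL_r}$, the sum over interlacing $\mu$.

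The key identity to establish is then a recursive one: summing $H(\pi)\x^\pi$ over all patterns $\pi$ restricting to a fixed $A_{r-1}$-pattern $\pi'$ produces $H(\pi')\,\x^{\pi'}$ times a single-variable factor, namely the contribution of the top row. That top-row contribution must be shown to equal the corresponding one-variable Tokuyama numerator, i.e.\ a product of $(1 - q^{-1}x_i)$ factors against the branching multiplicity weight. Here one uses the explicit formula $h(u) \in \{0, 1, -1/q, 1-1/q\}$ from Definition \ref{def:stdcontrib}: the vanishing of $h$ on entries that are simultaneously boxed and circled kills precisely the degenerate configurations, and the remaining terms telescope exactly as in the $r = 1$ case applied to each factor $(1 - q^{-1}\x^{\alpha})$ in $D(\x)$ corresponding to a positive root involving $\alpha_1$. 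Assembling these, $\sum_\pi H(\pi)\x^\pi = \bigl(\prod_{\alpha > 0,\, \alpha \not\in \Phi'} (1 - q^{-1}\x^\alpha)\bigr)\sum_{\pi'} H(\pi')\x^{\pi'}$, and the inductive hypothesis identifies the inner sum with $\chi_{\theta'}(\x)D'(\x)$; the branching rule then reassembles $\sum_{\theta'} \chi_{\theta'}$ into $\chi_{\theta+\rho}$, completing the step.

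The main obstacle, and the part requiring genuine care rather than bookkeeping, is the telescoping computation for the top-row contribution. One has to show that the alternating-sign sum over the boxing/circling states of the deleted entries — with the subtle boundary behavior where an entry is forced to be boxed or circled by reaching the edge of the cone $C_{\theta+\rho}$ — collapses to the clean product $\prod (1 - q^{-1}x_i)$ against the right monomial, including getting the shifted weight $\theta' = \theta' (\pi)$ exactly right so that it matches a dominant weight appearing in the branching. This amounts to verifying that Littelmann's parametrization of $C_{\theta+\rho}$ is compatible, row by row, with the Gel$'$fand--Cetlin interlacing pattern, and that the standard contribution $H$ factors accordingly; I would isolate this as a self-contained lemma about one-dimensional "strings" in the crystal before feeding it into the induction. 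Everything else — the base case, the reassembly via branching, the identification $N_\theta = \chi_\theta D$ — is then routine.
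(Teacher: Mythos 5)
The paper does not prove Theorem \ref{thm:tok} at all: it is quoted verbatim from Tokuyama's original work \cite{tokuyama} ``for the convenience of the reader,'' so there is no in-paper argument to compare yours against. Judged on its own terms, your outline has the right architecture --- induction on rank via the row-deletion/branching structure of Littelmann (equivalently Gel$'$fand--Cetlin) patterns, with the $h(u)=0$ rule killing non-strict configurations --- and your base case $r=1$ is correct and complete. But the proposal has a genuine gap exactly where you locate ``the part requiring genuine care'': the telescoping lemma for the top-row contribution is not a bookkeeping step to be deferred, it \emph{is} the theorem. Concretely, what you need is a $q$-deformed branching (Pieri-type) identity of the shape
\[
\sum_{\mu}\Bigl(\sum_{\tau}H(\tau)\,\x^{\tau}\Bigr)\chi'_{\mu-\rho'}(\x')
\;=\;\chi_{\theta}(\x)\prod_{\substack{\alpha>0\\ \alpha\notin\Phi'}}\bigl(1-q^{-1}\x^{\alpha}\bigr),
\]
where $\mu$ runs over rows interlacing $\theta+\rho$ and $\tau$ over the deleted top-row data compatible with $\mu$. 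This is a nontrivial symmetric-function identity (in Tokuyama's paper it is the substance of the inductive step), and asserting that it ``telescopes'' does not establish it.

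Moreover, the factorization you display,
\[
\sum_{\pi}H(\pi)\x^{\pi}=\Bigl(\prod_{\substack{\alpha>0\\ \alpha\notin\Phi'}}(1-q^{-1}\x^{\alpha})\Bigr)\sum_{\pi'}H(\pi')\x^{\pi'},
\]
cannot be literally correct as stated: for a fixed sub-pattern $\pi'$ the set of admissible top rows, their boxing/circling states, and the monomials $\x^{\tau}$ they carry all depend on the interlacing constraints between $\theta+\rho$ and the weight of $\pi'$, so the inner sum over $\tau$ is not a constant prefactor independent of $\pi'$. The $\mu$-dependence must be absorbed against the characters $\chi'_{\mu-\rho'}$ as in the display above, not pulled outside the sum. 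Until that deformed branching identity is stated precisely and proved (or cited), the proposal is an outline of a known proof strategy rather than a proof.
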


\begin{example}\label{ex:a2}
If $\Phi = A_{2}$, then patterns have the form $\pi = [a,b][c]$; for
such a $\pi$ we have $\x^{\pi}= x^{b+c}y^{a}$.  The circling
inequalities are $a\geq b\geq 0, c\geq 0$, and the boxing inequalities
are 
\[
b\leq \ell_{1}, a\leq \ell_{2}+b, c\leq \ell_{1}+a-2b.
\]
If we take $\theta = 0$, then $\chi_{0}=1$, thus
\eqref{eq:tokstatement} becomes a deformed version of the Weyl
denominator formula.  The sum is over the 8 patterns for
$\BBB (\rho)$:
\[
[0^{\circ },0^{\circ }][0^{\circ }], [0^{\circ },0^{\circ }][\ul{1}],
[\ul{1},0^{\circ }][0^{\circ }], [\ul{1},0^{\circ }][1],
[\ul{1},0^{\circ }][2], [1^{\circ },\ul{1}][\ul{0}^{\circ }],
[\ul{2},\ul{1}][0^{\circ }],[\ul{2},\ul{1}][\ul{1}].
\]
The standard contributions are 
\[
1, -1/q, -1/q, -(1/q) (1-1/q), (-1/q)^{2}, 0, (-1/q)^{2}, (-1/q)^{3},
\]
and one can check that $N_{\theta} (\x) = 1-q^{-1}x-q^{-1}y+
(q^{-2}-q^{-1}) xy +q^{-2} x^{2}y + q^{-2}xy^{2} -q^{-3}x^{2}y^{2} = D (\x)$.

\end{example}

\section{A Conjectural Tokuyama formula for $G_{2}$}

We now present our conjectural generalization of Tokuyama's theorem
for $G_{2}$.  As a first approximation, define the polynomial 
\begin{equation}\label{eq:guess}
\sum_{\pi \in \BBB (\theta +\rho )} H (\pi)\x^{\pi}.
\end{equation}
In other words, we simply take each pattern's contribution to be the
standard contribution from Definition \ref{def:stdcontrib}, where
boxing and circling are computed as in
\eqref{eq:circling}--\eqref{eq:boxing}.  One quickly sees that
\eqref{eq:guess} is not correct: \eqref{eq:guess} does not equal
$\chi_{\theta} (\x) D(\x) $.  On the other hand, \eqref{eq:guess} is
not that far from our goal: only certain coefficients in the sum are
wrong, and the corresponding monomials all contain at least one
pattern with a special form:

\begin{definition}\label{def:badmiddle}
A $G_{2}$-Littelmann pattern $[a,b,c,d,e][f]$ is called \emph{bad
middle} if $c=b+d$ and $b=d+1$.
\end{definition}
Note that whether or not a pattern is bad middle depends only its top
row, and is independent of the bottom row $[f]$.
We are now ready to give the main definition needed for our
conjecture.  

\begin{definition}\label{def:Hhat}
Let $\pi=[a,b,c,d,e][f]$ be a boxed and circled $G_{2}$ Littelmann pattern. We define
the contribution $\hat{H} (\pi)\in \Z [q^{-1}]$ as follows. 

First, if $\pi$ is not bad middle, or if $\pi$ is bad middle
but the boxing is not specified below, or if $\pi$ has an
entry that is both boxed and circled, then put $\hat{H}(\pi) =
H(\pi)$, the standard contribution of $\pi$.

Otherwise, we put $\hat{H}(\pi) =
\hat{T}(\pi')h(f)$, where $\pi '$ denotes the top row of $\pi$, and
$\hat{T}$ is defined as follows:
\begin{enumerate}
\item If $\pi '$ has boxing $\bx[0,0,1,0,0]$, then
we put $\hat{T}(\pi') = 0$. 
\item If $\pi'$ has boxing $\bx[1,0,1,0,0]$, then
we put
\[
\hat{T}(\pi') = \left\{
\begin{array}{l l}
0 & \quad \text{if $d=0$,} \\
T(\pi') & \quad \text{if $d>0$.}
\end{array} \right. 
\]
Here and in what follows we write $T (\pi')$ for the product of $h
(u)$ over the entries in the row $\pi ' \subset \pi$ (in other words,
this is what one would compute as the standard contribution of the top
row $\pi '$).
\item If $\pi'$ has boxing $\bx[1,0,0,0,0]$, then
we put
\[
\hat{T}(\pi') = \left\{
\begin{array}{l l}
(-q+1)/q^2 & \quad \text{if $e=0$ and $d=0$,} \\
(-q^3+2q^2-2q+1)/q^4 & \quad \text{if $e=0$ and $d>0$,} \\
T(\pi') & \quad \text{if $e>0$.}
\end{array} \right.
\]
\item If $\pi'$ has boxing $\bx[0,1,0,1,0]$, then
we put
\[
\hat{T}(\pi') = \left\{
\begin{array}{l l}
T(\pi') & \quad \text{if $a=b$,} \\
0 & \quad \text{if $b<a<c$ and $e=0$,} \\
(-q^2+2q-1)/q^5 & \quad \text{if $b<a<c-e$ and $e>0$,} \\
(q-1)/q^3 & \quad \text{if $a=c$ and $e=0$,} \\
(q^3-2q^2+2q-1)/q^5 & \quad \text{if $a=c-e$ and $e>0$,} \\
0 & \quad \text{if $a>c$ and $e=0$,} \\
(-q^2+2q-1)/q^5 & \quad \text{if $a>c-e$ and $e>0$.}
\end{array} \right. 
\]
\end{enumerate}
\item If $\pi'$ has boxing $\bx[0,0,0,0,0]$ and
$e=0$, then we put
\[
\hat{T}(\pi') = \left\{
\begin{array}{l l}
(q^2-2q+1)/q^2 & \quad \text{if $a=b$ and $d>0$,} \\
(q^3-3q^2+3q-1)/q^3 & \quad \text{if $b<a<c$ and $d>0$,} \\
(q^3-3q^2+4q-2)/q^3 & \quad \text{if $a=c$ and $d>0$,} \\
(q^3-3q^2+3q-1)/q^3 & \quad \text{if $a>c$ and $d>0$,} \\
(q-1)/q & \quad \text{if $a=b$ and $d=0$,} \\
(q^2-2q+1)/q^2 & \quad \text{if $a>b$ and $d=0$.}
\end{array} \right. 
\]

\item Finally, if $\pi'$ has boxing
$\bx[0,0,0,0,0]$ and $e>0$, then we put
\[
\hat{T}(\pi') = \left\{
\begin{array}{l l}
(q^4-3q^3+4q^2-3q+1)/q^4 & \quad \text{if $a=b$ and $d>e$,} \\
(q^5-4q^4+7q^3-7q^2+4q-1)/q^5 & \quad \text{if $a>b$ and $d>e$,} \\
(q^2-2q+1)/q^2 & \quad \text{if $a=b$ and $d=e$,} \\
(q^4-3q^3+4q^2-3q+1)/q^4 & \quad \text{if $a>b$ and $d=e$.}
\end{array} \right. 
\]
\end{definition}
We can now state our conjecture:

\begin{conj}\label{conj:g2conj}
Let $\Phi=G_2$ and put $N_{\theta} (\x) = \chi_{\theta} (\x) D (\x)$,
where $\chi_{\theta}$ is the character of the irreducible
representation $V_{\theta }$ of $G_{2}$ of lowest weight $-\theta$, shifted to be an element of $\C
[\Lambda]$ (as in the paragraph before Definition \ref{def:toknum}),
and where $D (\x) = \prod_{\alpha >0} (1-q^{-1}\x^{\alpha})$ is the
deformed Weyl denominator \eqref{eqn:def}.  Then we have
\begin{equation}\label{eq:tokg2statement}
N_\theta (\x )  = \sum_{\pi \in \BBB (\theta +\rho)}\hat{H}(\pi)\mb{x}^\pi.
\end{equation}
\end{conj}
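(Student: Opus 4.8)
The plan is to convert Conjecture~\ref{conj:g2conj} into a purely combinatorial identity on the crystal $\BBB(\theta+\rho)$ and then to separate the bulk of the crystal, where Theorem~\ref{thm:G2GK} already supplies the answer, from a boundary contribution coming from patterns with boxed entries. By McNamara's theorem~\cite{mcnamara}, the numerator $N_\theta(\x)=\chi_\theta(\x)D(\x)$ equals a sum over $\BBB(\theta+\rho)$ of a weight function $G(\pi)$ built from the Berenstein--Zelevinsky--Littelmann path attached to the reduced word $w_0=s_2s_1s_2s_1s_2s_1$; in the non-metaplectic ($n=1$) case its local factors take the values $0,1,-1/q,1-1/q$ of Definition~\ref{def:stdcontrib}, but at a vertex where the decorations along the path are non-generic McNamara's recipe produces a sum of such products rather than a single monomial in those factors, and this is exactly the source of the discrepancy noted after~\eqref{eq:guess}. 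The first step is therefore to make $G(\pi)$ completely explicit as a function of the entries $a,\dots,f$; since the relevant decorations of a $G_2$ path fall into finitely many combinatorial types, this is a finite (if lengthy) computation, after which Conjecture~\ref{conj:g2conj} becomes the assertion that $G$ and $\hat H$ have the same sum over every fibre of the monomial map $\pi\mapsto\x^\pi$. One expects the fibres on which they differ pointwise to be exactly those containing a bad-middle pattern (Definition~\ref{def:badmiddle}), which is why $\hat H$ is modified only there.

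The second step proves this fibre identity in the \emph{stable range}. Fix a target weight $\beta=m\alpha_1+n\alpha_2\in\Lambda^{+}$. Once $\theta=\ell_1\varpi_1+\ell_2\varpi_2$ has $\ell_1,\ell_2$ large compared with $m,n$, every $\pi\in\BBB(\theta+\rho)$ with $\x^\pi=\x^\beta$ satisfies the boxing inequalities~\eqref{eq:boxing} strictly and so has no boxed entry; hence both the fibre $\{\pi\in\BBB(\theta+\rho):\x^\pi=\x^\beta\}$ and the numbers $\hat H(\pi)$ on it depend only on the circling inequalities~\eqref{eq:circling} and on bad-middleness, and are independent of $\theta$. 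On the representation side, writing the coefficient of $\x^\beta$ in $\chi_\theta(\x)D(\x)$ as $\sum_{S\subseteq\posroots}(-q^{-1})^{|S|}\,c_\theta\!\bigl(\beta-\sum_{\alpha\in S}\alpha\bigr)$, where $c_\theta$ denotes the weight multiplicities of the shifted character, each term stabilises because $c_\theta(\mu)$ is a value of the Kostant partition function as soon as $\mu$ is close enough to the lowest weight. The common stable value of the two sides is precisely the content of Theorem~\ref{thm:G2GK}, the $\theta\to\infty$ form of~\eqref{eq:tokg2statement} over the infinite crystal $\BBB(\infty)$, where no entry is ever boxed. As consistency tests one should also check the two easy specialisations of~\eqref{eq:tokg2statement}: at $q\to\infty$ it asserts that the sum of $\x^\pi$ over the unboxed patterns of $\BBB(\theta+\rho)$ equals $\chi_\theta(\x)$, and at $q=1$ one has $\hat H=H$ throughout, so~\eqref{eq:tokg2statement} reduces to Littelmann's combinatorial form of the Weyl character formula.

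The remaining and hardest step is the \emph{boundary correction}: for each $\theta$ one must control how the true coefficient of $\x^\beta$ differs from its stable value, the difference coming entirely from patterns having at least one boxed entry, i.e.\ those on a boxing face of $C_{\theta+\rho}$. I would handle this by a double induction on $(\ell_1,\ell_2)$, with base case $\theta=0$ (the crystal $\BBB(\rho)$) handled by a finite verification in the spirit of Example~\ref{ex:a2}. For the inductive step, on the representation side pass from $\theta$ to $\theta+\varpi_i$ ($i=1,2$) through the Clebsch--Gordan decomposition of $V_\theta\otimes V_{\varpi_i}$, where $V_{\varpi_1},V_{\varpi_2}$ are the two fundamental representations of $G_2$ (of dimensions $7$ and $14$); this gives a recursion $N_{\theta+\varpi_i}(\x)=\chi_{\varpi_i}(\x)\,N_\theta(\x)-\sum_j\x^{\mu_j}N_{\theta_j}(\x)$ with finitely many explicit lower weights $\theta_j$ (which become virtual characters when $\theta$ sits near a wall). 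On the crystal side one must establish the matching ``peeling'' recursion for $\sum_\pi\hat H(\pi)\x^\pi$: when $\ell_i$ increases by one, the bounds in~\eqref{eq:boxing} involving $\ell_i$ move outward in a controlled way, so certain boxed patterns become unboxed and a single thin layer of new patterns enters. The main obstacle is that this matching must be verified compatibly with \emph{every} branch of the piecewise definition of $\hat T$ in Definition~\ref{def:Hhat}: the recursion for a boxed bad-middle pattern next to a wall feeds into the one for an unboxed bad-middle pattern a layer further in, so the roughly two dozen cases of $\hat T$---together with the Gauss-sum identities that Step~1 shows they encode---must all remain consistent under peeling. That bookkeeping, rather than any conceptual gap, is presumably what has so far left~\eqref{eq:tokg2statement} a conjecture. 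An alternative route would be to realise $\sum_\pi\hat H(\pi)\x^\pi$ as the partition function of a solvable lattice model for $G_2$ and to deduce the $W(G_2)$ functional equations in the manner of Brubaker--Bump--Friedberg; but no such model is presently known, so that route does not look any easier.
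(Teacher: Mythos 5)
This statement is a \emph{conjecture} in the paper: the authors do not prove it, and offer only (a) computer verification for $0\leq\ell_i\leq 4$ (Proposition \ref{prop:checking}) and for $\theta=6\varpi_1+6\varpi_2$ (Remark \ref{rem:moredata}), and (b) an unconditional proof of the limiting case $\theta\to\infty$, namely Theorem \ref{thm:G2GK}, via the vector-partition generating function (Lemma \ref{lem:index}) and the cone decomposition $C_\infty=C_1\cup C_2$ (Lemma \ref{lem:bijection}). Your proposal does not close this gap: it is a research program, not a proof, and you say as much at the end of Step 3. Measured against what the paper actually establishes, your Step 2 is the part that is genuinely sound and genuinely aligned with the paper --- the observation that for fixed $\beta$ and $\ell_1,\ell_2\gg 0$ the fibre over $\x^\beta$ contains no boxed entries, and that the stable value of both sides of \eqref{eq:tokg2statement} is exactly the coefficient identity of Theorem \ref{thm:G2GK}, is correct and is how the paper motivates proving the Gindikin--Karpelevi\v c case. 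But Steps 1 and 3 each contain an unproved assertion that is essentially equivalent in difficulty to the conjecture itself.

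Concretely: in Step 1 you assert that making McNamara's weight function $G(\pi)$ explicit for the word $s_2s_1s_2s_1s_2s_1$ is ``a finite (if lengthy) computation.'' The paper explicitly notes that this has not been done for any type other than $A$, and even granting it, you would then face a second identity --- that $G$ and $\hat H$ have equal sums over every fibre of $\pi\mapsto\x^\pi$ --- for which you give no argument beyond the expectation that discrepancies are confined to bad-middle fibres. In Step 3 the claimed crystal-side ``peeling'' recursion for $\sum_\pi\hat H(\pi)\x^\pi$ under $\ell_i\mapsto\ell_i+1$, matching the Clebsch--Gordan recursion for $N_\theta$, is precisely the conjecture restated in inductive form: verifying that the two dozen branches of $\hat T$ in Definition \ref{def:Hhat} transform compatibly as boxed patterns become unboxed is not bookkeeping layered on top of a completed argument, it \emph{is} the missing argument, and no single case of it is carried out. (Your $q=1$ consistency check also needs care: several branches of $\hat T$, e.g.\ $\hat T=0$ for boxing $\bx[0,0,1,0,0]$, do not visibly reduce to $T(\pi')$ at $q=1$, so the claim that $\hat H=H$ ``throughout'' at $q=1$ requires checking that the affected boxed-and-circled configurations cannot occur.) In short, the proposal correctly identifies the stable-range reduction that the paper itself proves as Theorem \ref{thm:G2GK}, but the finite-$\theta$ correction --- the actual content of Conjecture \ref{conj:g2conj} --- remains open both in the paper and in your plan.
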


Although we cannot currently prove Conjecture \ref{conj:g2conj}, we
have checked it in many cases by computer:
\begin{proposition}\label{prop:checking}
Conjecture \eqref{conj:g2conj} is true for all weights $\theta =
\ell_{1}\varpi_{1} + \ell_{2}\varpi_{2}$ with $0\leq \ell_{i} \leq
4$.  
\end{proposition}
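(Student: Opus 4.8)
The statement is a finite assertion — there are only $25$ weights $\theta = \ell_1\varpi_1 + \ell_2\varpi_2$ with $0\le\ell_i\le4$ — so the plan is simply to compute both sides of \eqref{eq:tokg2statement} for each such $\theta$, as explicit elements of $\Z[q^{-1}][x^{\pm1},y^{\pm1}]$, and to check that they agree coefficient by coefficient. I would carry this out in a computer algebra system; below I outline the three ingredients and the cross-checks I would build in.

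\emph{Left-hand side.} To compute $N_\theta(\x) = \chi_\theta(\x)D(\x)$, first compute the character of the $G_2$-representation $V_\theta$. One can use the Weyl character formula \eqref{eq:wcf0}, or more efficiently Freudenthal's multiplicity formula, or simply sum $\x^{\bar v}$ over the vertices $v$ of the crystal $\BBB(\theta)$; the last option is convenient because the crystal machinery is needed for the right-hand side anyway. Since $w_0 = -1$ for $G_2$, the representation of lowest weight $-\theta$ is the one of highest weight $\theta$, and the shift convention of the paragraph before Definition \ref{def:toknum} amounts to multiplying the usual character by $\x^\theta$, after which the support lies in $\Lambda^{+}$ and $\chi_\theta$ becomes a genuine polynomial in $x = \x^{\alpha_1}$ and $y = \x^{\alpha_2}$. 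Multiply by $D(\x) = \prod_{\alpha>0}(1-q^{-1}\x^\alpha)$, a product of six explicit binomials (one per positive root of $G_2$, with $\alpha_1$ short so that $x$ is the short variable), and expand.

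\emph{Right-hand side.} Using the reduced word $s_2 s_1 s_2 s_1 s_2 s_1$, the vertices of $\BBB(\theta+\rho)$ are the integer points $[a,b,c,d,e][f]$ of the polytope cut out by the circling inequalities \eqref{eq:circling} together with the boxing inequalities \eqref{eq:boxing} with $(\ell_1,\ell_2)$ replaced by $(\ell_1+1,\ell_2+1)$ (since $\rho = \varpi_1+\varpi_2$). This polytope is bounded, so the enumeration is finite and, in the stated range, small. For each pattern $\pi$ I would record the circling set (entries where an inequality of \eqref{eq:circling} is non-strict) and the boxing set (entries where an inequality of \eqref{eq:boxing} is an equality), test whether $\pi$ is bad middle ($c = b+d$ and $b = d+1$), and then evaluate $\hat{H}(\pi)$ by a line-by-line translation of Definition \ref{def:Hhat}: the standard contribution $H(\pi) = \prod_{u\in\pi}h(u)$ in the generic case, and the relevant branch of $\hat{T}(\pi')h(f)$ — keyed on the top-row boxing pattern $\bx[\,\cdot\,]$ and on the auxiliary conditions on $a,b,c,d,e$ — when $\pi$ is bad middle with one of the listed boxings and has no entry that is both boxed and circled. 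Finally accumulate $\sum_{\pi}\hat{H}(\pi)\,\x^\pi$ with $\x^\pi = x^{a+c+e}y^{b+d+f}$ and compare with the left-hand side.

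\emph{Main obstacle and safeguards.} There is no conceptual difficulty here; the real risk is a transcription error, either in the intricate case analysis of Definition \ref{def:Hhat} or in the polytope/boxing/circling bookkeeping. To guard against this I would build in several independent consistency checks: specializing $q = 1$, where $D(\x) = \Delta(\x)$ and both sides must collapse to the signed sum $\sum_{w\in W}(\sign w)\,\x^{w(\theta+\rho)-\rho}$ of \eqref{eq:wcf0}; the case $\theta = 0$, where the identity must reproduce the deformed Weyl denominator $D(\x)$ itself (the $G_2$ analogue of Example \ref{ex:a2}); and an independent recomputation of $\chi_\theta$ by two of the three methods above. Agreement of all checks across the $25$ weights then establishes the proposition.
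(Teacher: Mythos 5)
Your proposal is correct and matches the paper's approach: the proposition is established by direct computer verification, enumerating the patterns of $\BBB(\theta+\rho)$ for each of the finitely many weights and comparing $\sum_\pi \hat{H}(\pi)\x^\pi$ with $\chi_\theta(\x)D(\x)$ coefficient by coefficient (the paper offers no further argument, and even reports pushing the same check to $\theta = 6\varpi_1+6\varpi_2$ in Remark \ref{rem:moredata}). Your added safeguards (the $q=1$ and $\theta=0$ specializations, independent recomputation of the character) are sensible implementation details but do not change the method.
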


\begin{example}\label{ex:g2rho}
Let $\theta =0$.  Then as in Example \ref{ex:a2} the identity
\eqref{eq:tokg2statement} becomes a deformed version of the Weyl
denominator identity.  The sum is taken over $64$ patterns.  On $24 $ of these
$\hat H$ vanishes since an entry is both boxed and circled.  Of the
remaining $40$, there are $12$ patterns that are bad middle, and $7$
of these have their contributions altered by Conjecture 
\ref{conj:g2conj}:
\begin{enumerate}
\item There are $2$ patterns with top row $[1^{\circ }, 1,
\underline{1}, 0^{\circ }, 0^{\circ }]$ and $3$ with top row
$[\underline{2}, 1, \underline{1}, 0^{\circ }, 0^{\circ }]$.  All of
these have $\hat H = 0$ (the first 2 by (1) and the second 3 by (2) in
Definition \ref{def:Hhat}).
\item There are $2$ patterns $[3, \underline{2}, 3, \underline{1},
0^{\circ }][0^{\circ }]$ and $[3, \underline{2}, 3, \underline{1},
0^{\circ }][\underline{1}]$.  Using (4) in Definition \ref{def:Hhat},
we compute that the first has $\hat H = (q-1)/q^{3}$ and the second
has $\hat H = -(q-1)/q^{4}$.
\end{enumerate}
\end{example}

\begin{remark}\label{rem:moredata}
We have checked \eqref{eq:tokg2statement} for larger weights than
those in Proposition \ref{prop:checking}.  The largest example we
checked was $\theta = 6\varpi_{1} + 6\varpi_{2}$.  For this example
the crystal graph $\BBB (\theta +\rho)$ has $262144$ vertices.
\end{remark}

\begin{remark}\label{rem:typeB}
The motivation to consider bad middle patterns comes from a similar
investigation by the third-named author into an analogue of Tokuyama's
theorem for the root system of type $B$ \cite{brconjecture}.  Indeed the
circling inequalities for the top row of the $G_{2}$-patterns are very
similar to those for type $B$ for a certain choice of reduced
expression for $w_0$.
\end{remark}

\section{Gindikin--Karpelevi\v c formula}\label{s:gk}

Let $F$ be a nonarchimedian local field with $\OOO$ its
valuation ring.  Let $\p$ be a uniformizer and let $q$ be the
cardinality of the residue field $\OOO/\p \OOO$. 

Let $G$ be a simply-connected split Chevalley group over $F$; for us
this will ultimately be of type $G_{2}$.  Let $T\subset B\subset G$ be
a maximal torus and a Borel subgroup.  Let $U^{-}$ be the opposite
unipotent radical to $B$.  Let $K\subset G$ be the maximal compact
subgroup $G (\OOO)$.

Let $\Phi$ be the root system of $G$ determined by $T$ and $B$, and
let $\Delta \subset \Phi$ be the corresponding simple roots.  As
before let $\Phi = \Phi^{+}\cup \Phi^{-}$ be the decomposition into
positive and negative roots.  For $\alpha \in \Phi $ let $e_{\alpha
}\colon F\rightarrow G$ be the generator of the root subgroup
corresponding to $\alpha$, and let $h_{\alpha} \colon F\rightarrow G$
be the coroot corresponding to $\alpha$. Thus $T$ is the subgroup
generated by $\{h_{\alpha}\mid \alpha \in \Delta \}$, $B$ is generated
by $T$ and $\{e_{\alpha}\mid \alpha >0 \}$, and $U^{-}$ is generated
by $\{e_{\alpha}\mid \alpha <0 \}$.

Now we introduce the ``spectral parameters.''  Let $\{z_{\alpha} \}$
be a set of nonzero complex numbers indexed by the simple roots.  Given any
root $\beta \in \Phi$, we define $\z^{\beta}\in \C$ by 
\[
\z^{\beta} = \prod_{\alpha \in \Delta} z_{\alpha}^{k_{\alpha}}, \quad
\text{where $\beta = \sum_{\alpha \in \Delta} k_{\alpha}\alpha$.}
\]
We can use the $\{z_{\alpha} \}$ to define a character $\chi \colon
T\rightarrow \C$ by putting
\[
\chi (\prod_{\alpha \in \Delta} h_{\alpha} (\p^{m_{\alpha}})) =
\prod_{\alpha \in \Delta} z_{\alpha}^{m_{\alpha }}, \quad m_{\alpha}\in \Z,
\]
and then declaring that $\chi$ is trivial on 
$T\cap K $.  We can extend $\chi$ to a character on $B$, and can then
define the \emph{principal series representation} $V_{\chi}$ by 
\[
V_{\chi} = \{f\colon G \rightarrow \C \mid f (bg) = \delta^{1/2}
(b)\chi (b)f (g), b\in B \}.
\] 
Here $\delta$ is the modular quasi-character of $B$, and the action of
$G$ is given by right translations: $(g\cdot f) (g') := f (g'g)$.  One
can prove that the space of $K$-invariant vectors $V_{\chi}^{K}$ is
one-dimensional.  We choose a nonzero element $\varphi_{K} \in
V_{\chi}^{K}$, called the \emph{spherical vector}, such that 
\[
\varphi_{K} (bk) = \delta^{1/2} (b)\chi (b), \quad b\in B, k\in K.
\]
We can now state the \emph{Gindikin--Karpelevi\v c formula}:

\begin{theorem}\label{thm:GKstatement}
We have 
\begin{equation}\label{eq:GKstatement}
\int_{U^{-} (F)} \varphi_{K} (u)\,du = \prod_{\alpha >0}\frac{1-q^{-1}\z^{\alpha}}{1-\z^{\alpha}}.
\end{equation}
\end{theorem}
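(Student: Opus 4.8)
The plan is to follow the classical rank-one reduction argument for the Gindikin--Karpelevič integral, which is the standard route to \eqref{eq:GKstatement} and makes no special use of the type $G_2$ structure. First I would recall that any $u \in U^{-}(F)$ can be written, using a reduced expression $w_0 = s_{i_1}\cdots s_{i_N}$ for the longest Weyl word, in the factored form $u = e_{\beta_1}(t_1)\cdots e_{\beta_N}(t_N)$, where $\beta_1,\dotsc,\beta_N$ is the associated ordering of $\Phi^{-}$ and the $t_j$ range over $F$; moreover this parametrization is measure-preserving, i.e.\ $du = \prod_j dt_j$ for suitably normalized additive Haar measures $dt_j$. Thus the left-hand integral factors, up to the bookkeeping described below, into a product of $N = |\Phi^{+}|$ one-dimensional integrals, one for each positive root.

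Next I would reduce each one-dimensional integral to the rank-one ($\SL_2$ or $\PGL_2$) case. Fix a positive root $\alpha$ and consider $\int_F \varphi_K\big(e_{-\alpha}(t)\big)\,dt$ (after conjugating the other factors past, which only twists the relevant spectral parameter by a Weyl-group element and rescales the measure, so that $\z^{\alpha}$ is exactly what appears). The point is that $e_{-\alpha}(t)$ lies in the rank-one subgroup generated by the $\pm\alpha$ root groups, and on that subgroup $\varphi_K$ restricts to the spherical vector of the corresponding principal series of $\SL_2(F)$ (or $\PGL_2(F)$) with Satake parameter $\z^{\alpha}$. Using the Iwasawa decomposition in $\SL_2(F)$: for $t \in \OOO$ one has $e_{-\alpha}(t) \in K$ so the integrand is $1$; for $|t| = q^{k} > 1$ one computes $e_{-\alpha}(t) = (\text{upper triangular unipotent})\cdot h_\alpha(t^{-1})\cdot k'$ with $k' \in K$, whence $\varphi_K\big(e_{-\alpha}(t)\big) = \delta^{1/2}\chi$ evaluated on $h_\alpha(t^{-1})$, which equals $q^{-k}\z^{-k\alpha}$. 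Summing the geometric series
\[
\int_F \varphi_K\big(e_{-\alpha}(t)\big)\,dt = 1 + \sum_{k\ge 1}\big(q^{k} - q^{k-1}\big)\, q^{-k}\z^{-k\alpha}
\]
(the factor $q^k - q^{k-1}$ being the measure of $\{|t| = q^k\}$) gives $\dfrac{1 - q^{-1}\z^{-\alpha}}{1 - \z^{-\alpha}}$ for the root $\alpha$ in question.

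Finally I would assemble the product: running over $j = 1,\dotsc,N$ and tracking how the iterated conjugations permute and sign-change the exponents, one finds that the root appearing in the $j$-th factor is $w_0$-conjugate to $\beta_j$, so that as $j$ varies the factors $\dfrac{1 - q^{-1}\z^{\pm\gamma}}{1 - \z^{\pm\gamma}}$ range exactly once over $\gamma \in \Phi^{+}$ (the sign discrepancy is absorbed because $w_0$ sends $\Phi^{+}$ to $\Phi^{-}$, matching the $-\alpha$ that occurs naturally in $U^{-}$). This yields $\prod_{\alpha>0}\dfrac{1 - q^{-1}\z^{\alpha}}{1 - \z^{\alpha}}$, as claimed. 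The main obstacle is the bookkeeping in this last step: one must verify carefully that the successive conjugations in the factored integral do not introduce extra factors of $q$ in the measure beyond those accounted for, and that the Weyl-group twists of the spectral parameters conspire to give each positive root exactly once; this is where writing out a low-rank check (e.g.\ $\SL_3$) is reassuring, and where one invokes the fact that the combinatorics is independent of the particular reduced word chosen. Everything else is the standard rank-one $p$-adic computation.
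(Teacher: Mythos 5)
The paper does not prove Theorem \ref{thm:GKstatement} at all: it is quoted as known background, with the nonarchimedean case attributed to Langlands \cite{langlands.eulerproducts} (and the archimedean case to Gindikin--Karpelevi\v c). So there is no in-paper proof to compare against; the question is whether your sketch would stand on its own as a proof of the cited result.

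Your outline follows the standard route (rank-one reduction plus a geometric series), and the $\SL_2$ computation at its core is correct: $\int_F \varphi_K(e_{-\alpha}(t))\,dt$ does evaluate to a ratio of the form $(1-q^{-1}\z^{\pm\alpha})/(1-\z^{\pm\alpha})$ by exactly the Iwasawa-decomposition argument you give. The genuine gap is the first and third steps. Writing $u = e_{\beta_1}(t_1)\cdots e_{\beta_N}(t_N)$ does give a measure-preserving parametrization of $U^-(F)$, but the integral does \emph{not} thereby factor into $N$ one-dimensional integrals, because $\varphi_K$ of a product is not the product of the values and $U^-$ is far from abelian --- in $G_2$ the commutator $[e_{-\alpha_1}(s), e_{-\alpha_2}(t)]$ already involves four other negative root groups, so ``conjugating the other factors past'' generates new unipotent terms rather than merely twisting a spectral parameter. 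The correct mechanism, which your sketch gestures at but does not actually set up, is induction on the length of $w_0$ via the cocycle property of the intertwining operators: one defines $M(w)\varphi(g) = \int_{U\cap wU^-w^{-1}}\varphi(w^{-1}ug)\,du$, proves $M(w_1w_2) = M(w_1)M(w_2)$ when $\ell(w_1w_2)=\ell(w_1)+\ell(w_2)$ (this is where the genuine change of variables in the unipotent group happens, and where the commutators are absorbed), and then composes $N$ rank-one operators along a reduced word for $w_0$; each step contributes one factor $c_{\alpha}(\chi)$, with $\alpha$ running over $\{\alpha>0 : w_0\alpha<0\}=\Phi^+$. You should also note that the integral converges only for $\chi$ in the cone where $|\z^{\alpha}|<1$ for all $\alpha>0$, which is needed to sum the geometric series. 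With the factorization replaced by the cocycle argument, the rest of your computation goes through.
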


We remark that Gindikin--Karpelevi\v c proved their formula for $F$
archimedian, in which case the right of \eqref{eq:GKstatement} becomes
a product of ratios of Gamma functions.  The formula for $F$
nonarchimedian was proved by Langlands \cite{langlands.eulerproducts}.

Now let $\C [[\Lambda^{+}]] \simeq \C [[x_{1},\dotsc ,x_{r}]]$ be the
formal power series ring on the codominant cone, and consider the generating function 
\begin{equation}\label{eq:doverdelta}
\frac{D (\x)}{\Delta (\x)} = \prod_{\alpha >0}
\frac{1-q^{-1}\x^{\alpha}}{1-\x^{\alpha }}\in \C [[\Lambda^{+}]].
\end{equation}
Up to a simple change of notation, \eqref{eq:doverdelta} coincides
with the right hand side of \eqref{eq:GKstatement}.  Our goal is to
express \eqref{eq:doverdelta} as a sum over the infinite crystal $\BBB
(\infty)$.  This was done in type $A$ by Bump--Nakasuji \cite{bn}, and
for all types by McNamara \cite{mcnamara} and independently by
Kim--Lee \cite{kim.lee}.  In type $A$ these three results are
equivalent, and take the form 
\begin{equation}\label{eq:bnstatement}
\frac{D (\x)}{\Delta (\x)} = \sum_{\pi \in \BBB (\infty)} H (\pi) \x^{\pi },
\end{equation}
where $H (\pi)$ is the standard contribution of a Littelmann pattern.
Our goal is now to prove the following theorem:

\begin{theorem}\label{thm:G2GK}
Let $\BBB (\infty)$ be the infinite crystal for $\Phi = G_{2}$.  Then 
we have 
\begin{equation}\label{eq:g2gkstatement}
\frac{D (\x)}{\Delta (\x)} = \sum_{\pi \in \BBB (\infty)} \hat{H} (\pi) \x^{\pi },
\end{equation}
where $\hat {H}$ is defined in Definition \ref{def:Hhat}.
\end{theorem}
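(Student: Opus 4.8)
The plan is to pass to the limit $\theta \to \infty$ in Conjecture~\ref{conj:g2conj} --- but since that conjecture is unproved in general, the real work is to make the limiting statement \eqref{eq:g2gkstatement} independent of it. The key structural observation is that $D(\x)/\Delta(\x)$ is a \emph{product}, and that for a fixed monomial $\x^{\beta}$ with $\beta \in \Lambda^{+}$, only finitely many patterns $\pi \in \BBB(\infty)$ satisfy $\x^{\pi} = \x^{\beta}$ (because $a+c+e$ and $b+d+f$ are each bounded once the exponent vector $\beta$ is fixed, and the circling inequalities \eqref{eq:circling} then bound all six entries). Thus the right side of \eqref{eq:g2gkstatement} is a well-defined element of $\C[[\Lambda^{+}]]$, and it suffices to check equality coefficient by coefficient. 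First I would set up the comparison with the finite sums: for $\theta$ dominant, the cone $C_{\theta + \rho}$ exhausts $C_{\infty}$ as $\theta \to \infty$, and for any fixed $\beta$ all patterns contributing to the $\x^{\beta}$-coefficient lie in $\BBB(\theta+\rho)$ once $\ell_{1}, \ell_{2}$ are large enough. Meanwhile $N_{\theta}(\x) = \chi_{\theta}(\x) D(\x)$ and one checks directly that the $\x^{\beta}$-coefficient of $N_{\theta}(\x)/(\text{leading normalization})$ stabilizes, as $\theta \to \infty$, to the $\x^{\beta}$-coefficient of $D(\x)/\Delta(\x)$; this is the standard stabilization behind \eqref{eq:bnstatement} and is a formal consequence of $\chi_{\theta} \to 1/\Delta(\x)$ in $\C[[\Lambda^{+}]]$ after the lowest-weight shift. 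So \emph{if} Conjecture~\ref{conj:g2conj} held, \eqref{eq:g2gkstatement} would follow immediately; the point of the theorem is that we can prove the stable version unconditionally.

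The second, and main, step is therefore to verify \eqref{eq:g2gkstatement} directly. Here I would exploit that in $\BBB(\infty)$ \emph{nothing is boxed}: the boxing inequalities \eqref{eq:boxing} have $\ell_{1}, \ell_{2} = \infty$, so every entry is unboxed, no entry is ever both boxed and circled, and $H(\pi) = \prod_{u} h(u)$ with $h(u) \in \{1, 1-1/q\}$ according to whether $u$ is circled (i.e.\ its defining circling inequality is an equality) or not. Consequently the ``standard'' sum $\sum_{\pi \in \BBB(\infty)} H(\pi)\x^{\pi}$ factors completely along the chain $2a \ge 2b \ge c \ge 2d \ge 2e \ge 0$, $f \ge 0$: summing over $f \ge 0$ gives a geometric-type factor, summing over $(a,b,c,d,e)$ in the circling chain gives another, and after the change of variables $\x^{\pi} = x^{a+c+e} y^{b+d+f}$ one obtains a closed product which one compares termwise with $\prod_{\alpha>0}(1-q^{-1}\x^{\alpha})/(1-\x^{\alpha})$; this is exactly the computation underlying \eqref{eq:bnstatement} in type $A$ and runs the same way here. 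The difference between $\hat H$ and $H$ is supported entirely on bad-middle patterns ($c = b+d$, $b = d+1$), and in $\BBB(\infty)$ the relevant cases of Definition~\ref{def:Hhat} are precisely items (1)--(4) together with (5)'s ``$d = 0$'' branch and (6) --- restricted to the \emph{unboxed} sub-cases, which means only the ``$T(\pi')$'' alternatives and the circling-driven values survive, since a specified boxing $\bx[0,0,1,0,0]$ etc.\ forces that boxed entry to be absent in $\BBB(\infty)$. So in fact I expect that in $\BBB(\infty)$ one has $\hat H(\pi) = H(\pi)$ on the nose for \emph{every} pattern: each nontrivial clause of $\hat T$ is gated by a boxing pattern containing at least one $1$, which cannot occur in the infinite crystal. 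If that is right --- and checking it is a finite case analysis through the seven items of Definition~\ref{def:Hhat} --- then \eqref{eq:g2gkstatement} reduces to \eqref{eq:bnstatement}-style bookkeeping for $G_{2}$ with the standard contribution, with no correction terms at all.

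Assembling: (i) observe boxing is vacuous in $\BBB(\infty)$, hence $\hat H = H$ there; (ii) factor $\sum_{\pi} H(\pi)\x^{\pi}$ as an iterated sum over the circling chain and the free entry $f$; (iii) evaluate each factor and match against the six factors of $\prod_{\alpha>0}(1-q^{-1}\x^{\alpha})/(1-\x^{\alpha})$ under $x = \x^{\alpha_{1}}$, $y = \x^{\alpha_{2}}$, keeping track of which positive root of $G_{2}$ each chain-variable tracks; (iv) conclude. The main obstacle is step (iii): the $G_{2}$ circling chain $2a \ge 2b \ge c \ge 2d \ge 2e \ge 0$ has the awkward factors of $2$, so the lattice-point sum does not split as cleanly as in type $A$, and one must handle the parity of $c$ carefully --- summing over $c$ with $2d \le c \le 2b$ and tracking when the endpoints are attained (which is exactly when $b$, $c$, or $d$ gets circled). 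I expect this to come down to a generating-function identity in $x$ and $y$ that can be verified by direct manipulation, but it is where the real computation lives; a secondary obstacle is confirming rigorously in step (i) that none of the fractional values in Definition~\ref{def:Hhat} (e.g.\ $(-q+1)/q^{2}$, which arises under $e = 0, d = 0$ with boxing $\bx[1,0,0,0,0]$) can be triggered in $\BBB(\infty)$, i.e.\ that every such clause genuinely requires a boxed entry.
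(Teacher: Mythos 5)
There is a genuine gap, and it is fatal to the proposed strategy. Your step (i) asserts that every nontrivial clause of $\hat{T}$ in Definition \ref{def:Hhat} is gated by a boxing pattern containing at least one $1$, so that boxing being vacuous in $\BBB(\infty)$ forces $\hat{H}=H$. This is false: clauses (5) and (6) are gated by the boxing $\bx[0,0,0,0,0]$, i.e.\ by the \emph{completely unboxed} case, which is exactly the case every pattern in $\BBB(\infty)$ falls into. These clauses assign values genuinely different from the standard contribution. Concretely, take $\pi=[1,1,1,0,0][0]$: it is bad middle ($c=b+d$, $b=d+1$), unboxed, with $a,d,e,f$ circled and $b,c$ uncircled, so $H(\pi)=(1-q^{-1})^{2}$; but clause (5) with $a=b$ and $d=0$ gives $\hat{T}(\pi')=(q-1)/q$, hence $\hat{H}(\pi)=1-q^{-1}\neq H(\pi)$. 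Similarly, an unboxed bad-middle pattern with $a=b$, $d>0$, $e=0$ has $T(\pi')=(1-q^{-1})^{3}$ but $\hat{T}(\pi')=(1-q^{-1})^{2}$, and the $e>0$ cases of clause (6) produce polynomials such as $(q^{5}-4q^{4}+7q^{3}-7q^{2}+4q-1)/q^{5}$ that are not powers of $(1-q^{-1})$ at all. So the corrections are very much alive in the infinite crystal, and your reduction of \eqref{eq:g2gkstatement} to a pure standard-contribution identity collapses. Indeed the naive identity $\sum_{\pi}H(\pi)\x^{\pi}=D(\x)/\Delta(\x)$ is \emph{not} true for $G_{2}$ --- the whole point of $\hat{H}$ is to repair it.

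Your step (iii) also underestimates the second obstacle you flag. The chain $2a\ge 2b\ge c\ge 2d\ge 2e\ge 0$ does not merely make the generating-function bookkeeping awkward: the cone it cuts out is not unimodular (its primitive edge generators span an index-$2$ sublattice of $\Z^{6}$), so the sum does not factor along the chain as it does in type $A$. The paper's route is different and avoids both problems: it first writes $D(\x)/\Delta(\x)=\sum_{\xi}(1-q^{-1})^{\iota(\xi)}\x^{\xi}$ as a sum over vector partitions on $\Phi^{+}$ (Lemma \ref{lem:index}, after Kim--Lee), then constructs an explicit monomial-preserving bijection between unboxed patterns and vector partitions by decomposing $C_{\infty}$ into two unimodular subcones $C_{1}\cup C_{2}$ (Lemma \ref{lem:bijection}), and finally checks monomial by monomial that the corrected contributions $\hat{H}$ aggregate to $(1-q^{-1})^{\iota(\xi)}$; all discrepancies are localized to subcones of $C_{1}\cap C_{2}$ involving the rays through $v_{4}$ and $v_{6}$, and they cancel only after the clause-(5)/(6) corrections are applied. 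Any repair of your argument would have to engage with exactly this cancellation rather than eliminate it. (Your opening paragraph about deducing the theorem from Conjecture \ref{conj:g2conj} in the limit $\theta\to\infty$ is reasonable motivation but cannot be the proof, since the conjecture is unproved; the paper does not use it.)
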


Before we begin the proof, we need more notation.  Recall that a
\emph{vector partition} on the positive roots $\Phi^{+}$ is a function
$\xi \colon \Phi^{+}\rightarrow \Z_{\geq 0}$.  Define the \emph{index}
$\iota (\xi)$ of a vector partition to be the number of $\alpha \in
\Phi^{+}$ such that $\xi (\alpha ) \not = 0$.  Each vector partition
determines a monomial $\x^{\xi}\in \C [\Lambda^{+}]$ by $\x^{\xi} =
\x^{\beta}$, where
\begin{equation}\label{eq:sum}
\beta = \beta (\xi) := \sum_{\alpha >0}\xi
(\alpha)\alpha.
\end{equation}
We sometimes abuse notation and write a vector
partition as a sum as in \eqref{eq:sum}.

\begin{lemma}\label{lem:index}
We have 
\[
\frac{D (\x)}{\Delta (\x)} = \sum_{\xi} (1-q^{-1})^{\iota (\xi)}
\x^{\xi},
\]
where the sum is taken over all vector partitions on the positive roots.
\end{lemma}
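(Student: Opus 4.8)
The plan is to expand $D(\x)/\Delta(\x)$ as an element of $\C[[\Lambda^{+}]]$ (as in \eqref{eq:doverdelta}) and recognize the resulting sum as being indexed by vector partitions. Since every positive root $\alpha$ satisfies $\langle \alpha, \rho^{\vee}\rangle > 0$, a fixed weight $\mu \in \Lambda^{+}$ admits only finitely many expressions $\mu = \sum_{\alpha>0}\xi(\alpha)\alpha$ with $\xi(\alpha)\in\Z_{\ge 0}$; hence the right-hand side of the lemma is a well-defined formal power series, and the rearrangements below are legitimate.

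First I would treat a single factor. Using the geometric expansion $1/(1-\x^{\alpha}) = \sum_{k\ge 0}\x^{k\alpha}$ in $\C[[\Lambda^{+}]]$, one computes
\[
\frac{1-q^{-1}\x^{\alpha}}{1-\x^{\alpha}} = (1-q^{-1}\x^{\alpha})\sum_{k\ge 0}\x^{k\alpha} = 1 + (1-q^{-1})\sum_{k\ge 1}\x^{k\alpha}.
\]

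Next I would take the product of these factors over all $\alpha>0$ and distribute. Each term in the expansion is obtained by selecting, for every positive root $\alpha$, either the summand $1$ or a summand $(1-q^{-1})\x^{k\alpha}$ with some $k\ge 1$. Such a selection is precisely the data of a vector partition $\xi$ on $\Phi^{+}$: set $\xi(\alpha)=0$ when $1$ is chosen, and $\xi(\alpha)=k$ when $(1-q^{-1})\x^{k\alpha}$ is chosen. The positive roots for which a nontrivial summand was chosen form the support of $\xi$, whose cardinality is $\iota(\xi)$ by definition; thus the corresponding term equals
\[
(1-q^{-1})^{\iota(\xi)}\,\x^{\sum_{\alpha>0}\xi(\alpha)\alpha} = (1-q^{-1})^{\iota(\xi)}\,\x^{\xi}.
\]
Summing over all selections — equivalently, over all vector partitions $\xi$ on $\Phi^{+}$ — gives the claimed identity.

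I do not expect any real obstacle here: the lemma is a direct computation, and the only point that deserves a remark is the brief justification (via positivity of the positive roots against $\rho$) that these formal manipulations take place inside a well-defined power series ring, so that regrouping an infinite product of infinite series into a sum indexed by vector partitions is valid.
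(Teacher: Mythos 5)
Your argument is correct, but it is not the route the paper takes: the paper's entire proof of Lemma \ref{lem:index} is a citation of Theorem 1.6 of Kim--Lee \cite{kim.lee}, of which the lemma is a special case, whereas you supply a self-contained derivation. Your computation is the standard one and is sound: the identity $\frac{1-q^{-1}\x^{\alpha}}{1-\x^{\alpha}} = 1 + (1-q^{-1})\sum_{k\ge 1}\x^{k\alpha}$ is verified correctly, distributing the finite product over $\alpha\in\Phi^{+}$ puts the terms in bijection with vector partitions $\xi$ in exactly the way you describe, and the coefficient $(1-q^{-1})^{\iota(\xi)}$ falls out from counting the roots in the support of $\xi$. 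Your preliminary remark that each $\mu\in\Lambda^{+}$ admits only finitely many decompositions $\mu=\sum_{\alpha>0}\xi(\alpha)\alpha$ (via pairing against $\rho^{\vee}$) is the right justification that the rearrangement happens inside $\C[[\Lambda^{+}]]$; note that for $G_{2}$ the product has only six factors, so the only genuinely infinite process is the geometric series in each factor, and this is already handled by working in the completed ring. What your approach buys is independence from the literature and transparency about why the index $\iota(\xi)$ appears; what the paper's approach buys is brevity and an explicit pointer to the general-type statement, which is the same result the authors invoke elsewhere.
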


\begin{proof}
This is a special case of \cite[Theorem 1.6]{kim.lee}. 
\end{proof}

\begin{lemma}\label{lem:bijection}
There is a bijection between the $G_{2}$-Littelmann patterns
satisfying the circling inequalities \eqref{eq:circling} and vector
partitions on the positive roots for $G_{2}$ such that if $\pi$ is
taken to the the partition $\xi$, then $\x^{\pi} = \x^{\xi}$.
\end{lemma}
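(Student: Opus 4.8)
The plan is to prove the lemma by matching fibers over monomials. I will show that for every monomial $x^{m}y^{n}$ the number of $G_{2}$-Littelmann patterns $\pi$ satisfying the circling inequalities \eqref{eq:circling} with $\x^{\pi}=x^{m}y^{n}$ equals the number of vector partitions $\xi$ on $\posroots$ with $\x^{\xi}=x^{m}y^{n}$; fixing, for each $(m,n)$, any bijection between the two finite fibers then yields a bijection $\pi\mapsto\xi$ that preserves $\x$ by construction. For this it suffices to compare the generating functions $F(x,y)=\sum_{\pi}\x^{\pi}$, summed over all patterns satisfying \eqref{eq:circling}, and $G(x,y)=\sum_{\xi}\x^{\xi}$, summed over all vector partitions, and to check that both equal $\prod_{\alpha>0}(1-\x^{\alpha})^{-1}$ in $\C[[\Lambda^{+}]]$. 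The identity for $G$ is immediate: factoring over the six positive roots, $G(x,y)=\prod_{\alpha>0}\sum_{k\ge 0}\x^{k\alpha}=\prod_{\alpha>0}(1-\x^{\alpha})^{-1}$, which under $\x^{\alpha_{1}}=x$, $\x^{\alpha_{2}}=y$ is $\bigl[(1-x)(1-y)(1-xy)(1-x^{2}y)(1-x^{3}y)(1-x^{3}y^{2})\bigr]^{-1}$.

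The content is the computation of $F$ as a sum over the lattice cone cut out by \eqref{eq:circling}. The entry $f\ge 0$ is free, contributing a factor $(1-y)^{-1}$. For the top row I introduce slack coordinates $e\ge 0$, $d=e+d_{0}$, $c=2d+c_{0}$, $b=d+b_{0}$, $a=b+a_{0}$ with $a_{0},b_{0},d_{0}\ge 0$ and $0\le c_{0}\le 2b_{0}$; one checks directly that this is a bijection of lattice points onto the solutions of \eqref{eq:circling}, and that $a+c+e=4e+3d_{0}+a_{0}+b_{0}+c_{0}$ while $b+d=2e+2d_{0}+b_{0}$. Hence
\[
F(x,y)=\frac{1}{1-y}\cdot\frac{1}{1-x}\cdot\frac{1}{1-x^{4}y^{2}}\cdot\frac{1}{1-x^{3}y^{2}}\cdot\sum_{b_{0}\ge 0}(xy)^{b_{0}}\sum_{c_{0}=0}^{2b_{0}}x^{c_{0}}.
\]
The double sum equals $\tfrac{1}{1-x}\bigl(\tfrac{1}{1-xy}-\tfrac{x}{1-x^{3}y}\bigr)=\tfrac{1+x^{2}y}{(1-xy)(1-x^{3}y)}$, and writing $(1-x^{4}y^{2})^{-1}=\bigl[(1-x^{2}y)(1+x^{2}y)\bigr]^{-1}$ cancels the numerator $1+x^{2}y$, leaving $F(x,y)=\prod_{\alpha>0}(1-\x^{\alpha})^{-1}=G(x,y)$. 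Equating coefficients of $x^{m}y^{n}$ gives the desired fiber count, and the lemma follows.

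The main obstacle in an otherwise routine calculation is the behaviour of the entry $c$: because $c$ need not be even while $2b$ and $2d$ are, the chain $2b\ge c\ge 2d$ does not decouple into independent geometric series, and the truncated sum $\sum_{c_{0}=0}^{2b_{0}}x^{c_{0}}$ introduces the spurious factors $1+x^{2}y$ and $1-x^{4}y^{2}$; recognizing their quotient as $(1-x^{2}y)^{-1}$ is exactly what restores the product over positive roots. This is the same parity phenomenon that forces the \emph{bad middle} patterns of Definition \ref{def:badmiddle} to be singled out, and, in the type-$B$ analogy of Remark \ref{rem:typeB}, the patterns whose middle entry lies strictly between its even neighbours.

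Finally, I note a more structural route that bypasses the computation: by Littelmann's theorem the patterns satisfying \eqref{eq:circling} index the vertices of $\BBB(\infty)$, under which correspondence $\x^{\pi}$ records the weight of the vertex, so the fiber of $F$ over $x^{m}y^{n}$ has cardinality $\dim U(\mathfrak{n}^{-})_{-m\alpha_{1}-n\alpha_{2}}$, the Kostant partition number of $m\alpha_{1}+n\alpha_{2}$, which by definition is $\#\{\xi:\x^{\xi}=x^{m}y^{n}\}$. Either argument only produces an abstract bijection, which is all the statement asks for; if an explicit one is needed downstream it can be assembled from the slack-coordinate parametrization above together with any fixed lattice bijection $\{(b_{0},c_{0}):0\le c_{0}\le 2b_{0}\}\to\Z_{\ge 0}^{2}$.
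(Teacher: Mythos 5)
Your proof is correct, but it takes a genuinely different route from the paper's. You prove that the two sets are equinumerous fiber-by-fiber over monomials by computing both generating functions and showing each equals $\prod_{\alpha>0}(1-\x^{\alpha})^{-1}$; your slack-variable computation checks out (the numerator $1-x+x^{2}y-x^{3}y$ does factor as $(1-x)(1+x^{2}y)$, and the factor $1+x^{2}y$ cancels against $1-x^{4}y^{2}$ as you say), and your alternative appeal to Littelmann's parametrization of $\BBB(\infty)$ together with the Kostant partition function is also valid. The paper instead constructs an \emph{explicit} bijection: it observes that the cone $C_{\infty}$ cut out by \eqref{eq:circling} is simplicial but not unimodular (its primitive edge generators span a sublattice of index $2$ --- this is the same parity obstruction you isolate in the truncated sum over $c$), subdivides it into two unimodular cones $C_{1}\cup C_{2}$ by adjoining the ray through $v_{4}=(1,1,1,0,0,0)$, and reads the vector partition off the unique expression of each lattice point in the generators of $C_{1}$ or $C_{2}$, sending $v_{6}$ to $\alpha_{3}+\alpha'_{3}$. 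Your approach is shorter and proves the lemma as literally stated, since the statement only demands \emph{some} monomial-preserving bijection. But be aware that the specific bijection matters downstream: the proof of Theorem \ref{thm:G2GK} compares $\hat{H}(\pi)$ with $(1-q^{-1})^{\iota(\xi)}$ for ``the vector partition attached to $\pi$ in Lemma \ref{lem:bijection},'' and distinct partitions in the same fiber generally have different indices $\iota$, so Table \ref{tab:cones} and the case analysis by subcones of $C_{1}\cap C_{2}$ presuppose the paper's particular correspondence. An arbitrary fiberwise matching, as in your construction, would not support that argument without reworking the bookkeeping.
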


\begin{proof}
Let $C = C_{\infty }\subset \R^{6}$ be the cone defined by \eqref{eq:circling}.  The
simplicial cone $C$ is generated by the points 
\begin{multline}\label{eq:pts}
v_{1} = (0,0,0,0,0,1), v_{2} = (1,0,0,0,0,0), v_{3} = (1,1,0,0,0,0),
\\
v_{3}' = (1,1,2,0,0,0), v_{5} = (1,1,2,1,0,0), v_{6} = (1,1,2,1,1,0),   
\end{multline}
and these are the primitive lattice points on the edges of $C$.  One
can check that $C$ is not unimodular; that is, the sublattice of
$\Z^{6}$ generated by the points \eqref{eq:pts} is not $\Z^{6}$, and
is in fact a sublattice of index $2$.  One can decompose $C$ as a
union of unimodular cones $C_1 \cup C_{2}$ by including the point
$v_{4} = (1,1,1,0,0,0)$.  In particular, we have
\[
C_{1} = \langle v_{1}, v_{2}, v_{3}, v_{4}, v_{5}, v_{6} \rangle
\quad \text{and}\quad C_{2} = \langle  v_{1}, v_{2}, v'_{3}, v_{4}, v_{5}, v_{6} \rangle.
\]
Thus any lattice point in $C$ can be uniquely written as a $\Z$-linear
combination of the points $v_{1},v_{2},v_{3},v_{3}',v_{4},
v_{5},v_{6}$, where on $C_{1}$ (resp.~$C_{2}$) we use all the $v_{i}$
except $v_{3}'$ (resp.~$v_{3}$).

Number the roots as in Figure \ref{fig:g2}.  We can use a lattice
point $v\in C$ to determine a vector partition as follows:
\begin{itemize}
\item If $v\in C_{1}$, then $a_{1}v_{1} + a_{2}v_{2} + a_{3}v_{3} +
a_{4}v_{4} + a_{5}v_{5} + a_{6}v_{6}$ determines $a_{1}\alpha_{1} +
a_{2}\alpha_{2} + a_{3}\alpha_{3} + a_{4}\alpha_{4} + a_{5}\alpha_{5}
+ a_{6} (\alpha_{3}+\alpha_{3}')$. 
\item If $v\in C_{2}$, then $a_{1}v_{1} + a_{2}v_{2} + a'_{3}v'_{3} +
a_{4}v_{4} + a_{5}v_{5} + a_{6}v_{6}$ determines $a_{1}\alpha_{1} +
a_{2}\alpha_{2} + a'_{3}\alpha'_{3} + a_{4}\alpha_{4} + a_{5}\alpha_{5}
+ a_{6} (\alpha_{3}+\alpha_{3}')$. 
\end{itemize}
Hence lattice points in $C_{1}\cap C_{2}$ correspond to partitions
$\xi $ such that $\xi (\alpha_{3}) = \xi (\alpha'_{3})$, whereas
points in $C_{1} \smallsetminus C_{1}\cap C_{2}$ (resp.~$C_{2}
\smallsetminus C_{1}\cap C_{2}$ ) correspond to partitions such that
$\xi (\alpha_{3})>\xi (\alpha_{3}')$, (resp.~$\xi (\alpha_{3})<\xi
(\alpha_{3}')$).  It is easy to check that this correspondence is a
bijection with the desired properties, which completes the proof of
the lemma.
\end{proof}
\begin{figure}[htb]
\begin{center}
\begin{tikzpicture}[scale=1]
\clip (-2.51,-2.51) rectangle (2.51,2.51);
\coordinate (0) at (0:0);
\coordinate (1) at (0:1);
\coordinate (2) at (60:1);
\coordinate (3) at (120:1);
\coordinate (4) at (180:1);
\coordinate (5) at (240:1);
\coordinate (6) at (300:1);
\coordinate (10) at (30:1.7320508075688772935274463415058723670);
\coordinate (20) at (90:1.7320508075688772935274463415058723670);
\coordinate (30) at (150:1.7320508075688772935274463415058723670);
\coordinate (40) at (210:1.7320508075688772935274463415058723670);
\coordinate (50) at (270:1.7320508075688772935274463415058723670);
\coordinate (60) at (330:1.7320508075688772935274463415058723670);
\draw[thick,->] (0) -- (1);
\draw[thick,->] (0) -- (2);
\draw[thick,->] (0) -- (3);
\draw[thick,->] (0) -- (4);
\draw[thick,->] (0) -- (5);
\draw[thick,->] (0) -- (6);
\draw[thick,->] (0) -- (10);
\draw[thick,->] (0) -- (20);
\draw[thick,->] (0) -- (30);
\draw[thick,->] (0) -- (40);
\draw[thick,->] (0) -- (50);
\draw[thick,->] (0) -- (60);

\node at (150:2.4) {$\alpha_{1}$};
\node at (0:1.4) {$\alpha_{2}$};
\node at (120:1.4) {$\alpha_{3}$};
\node at (60:1.4) {$\alpha_{4}$};
\node at (30:2.5) {$\alpha'_{3}$};
\node at (90:2.1) {$\alpha_{5}$};
\end{tikzpicture}
\end{center}
\caption{\label{fig:g2}}
\end{figure}

\begin{proof}[Proof of Theorem \ref{thm:G2GK}]
The proof is by a direct computation with the contributions $\hat {H}$
in Definition \ref{def:Hhat}.  Since the computation is easy but
lengthy, we give the main points of the argument and leave the details
to the reader.   

The vertices of $\BBB (\infty)$ in the sum in \eqref{eq:g2gkstatement}
are parameterized by unboxed Littelmann patterns; the only requirement
is that the entries of such a pattern satisfy the circling
inequalities \eqref{eq:circling}.  By Lemmas \ref{lem:index} and
\ref{lem:bijection}, if for any such pattern $\pi$ we had $\hat{H}
(\pi) = (1-q^{-1})^{\iota (\xi)}$, where $\xi$ is the vector
partition attached to $\pi$ in Lemma \ref{lem:bijection}, then Theorem
\ref{thm:G2GK} would follow immediately.

Unfortunately this is not the case: for many patterns $\pi$ the
contribution $\hat{H} (\pi)$ is quite different.  The simplest example
is the unboxed pattern $ \pi = [1,1,1,0,0][0]$.  According to
Lemma \ref{lem:bijection}, this corresponds to the vector partition
$1\cdot \alpha_{4}$.  On the other hand, we have $\hat {H} (\pi) =
(1-q^{-1})^{2}$.  Another example is provided by the pattern $\pi'
= [1,1,2,1,1][0]$.  We have $\hat {H} (\pi') = (1-q^{-1})$, yet the
vector partition is $1\cdot \alpha_{3} + 1\cdot \alpha'_{3}$.

However, in some sense these two patterns, which correspond to the
primitive generators of the rays $\R v_{4}$ and $\R v_{6}$, are the
main difficulty: all the patterns whose contributions under Definition
\ref{def:Hhat} and Lemma \ref{lem:bijection} disagree live in the
$4$-dimensional intersection $C' = C_{1}\cap C_{2} = \langle v_{1},
v_{2}, v_{4}, v_{5}, v_{6}\rangle $, and involve the rays generated by
$v_{4}$ and $v_{6}$ in an essential way.

More precisely, let us indicate the relative interiors of subcones of
the intersection $C'$ by subsets of $\{1,2,4,5,6 \}$.  Thus for
instance $\{2,4,6 \}$ means the subset of $C'$ of the form $\{av_{2} +
bv_{4} + cv_{6} \mid a,b,c \in \R_{>0}\}$; we abbreviate the notation
further by eliminating braces and commas and write simply $246$.  Then
investigation of Definition \ref{def:Hhat} shows that the only
subcones where (i) there is a discrepancy between $\hat{H} (\pi)$ and
$(1-q^{-1})^{\iota (\xi)}$, or (ii) $\hat {H} (\pi) \not = H (\pi)$
are those that appear in Table \ref{tab:cones}.  In this table a mark
in row ``vp'' (resp.~row ``corr'') indicates possibility (i)
(resp.~possibility (ii)).

To complete the proof of the theorem, one must systematically go
through Table \ref{tab:cones} and check that the corrections in
Definition \ref{def:Hhat} exactly compensate for the difference
between $\hat {H} (\pi)$ and $(1-q^{-1})^{\iota (\xi)}$.  
We illustrate this with the cones $4$ and $6$, which typify the
process. 

Consider the lattice points $av_{4}$ and $bv_{6}$, where $a,b\geq
1$.  The patterns (ignoring the bottom row, which plays no role) are
$\pi_{4}(a) := [a,a,a,0,0]$ and $\pi_{6} (b) = [b,b,2b,b,b]$.  Suppose
$a = 2b$ is even.  Then $\pi_{4} (a)$ and $\pi_{6} (b)$ contribute to
the same monomial, and their total contribution is $(1-q^{-1}) +
(1-q^{-1})^{2}$, which is what one expects from Lemma \ref{lem:index}.
Now suppose $a = 2b+1$ is odd.  If $a=1$, then there is an explicit
correction in Definition \ref{def:Hhat} that sets $\hat H (\pi_{4}
(1)) = (1-q^{-1})$.  If $a>1$, then the patterns $av_{4},
v_{4}+bv_{6}$ and $b (v_{2}+v_{5}) +v_{4} \in 245$ all contribute to
the same monomial.  According to Defintion \ref{def:Hhat} the pattern
$av_{4}$ contributes $(1-q^{-1})^{2}$, the pattern $v_{4}+bv_{6}$
contributes $(1-q^{-1})^{2}$ as well, and $b (v_{2}+v_{5}) +v_{4}$
contributes $1-3q^{-1}+4q^{-2}-2q^{-3}$.  Adding up all these
contributions one obtains $3-7q^{-1}+6q^{-2}-2q^{-3}$.  This exactly
equals the contributions of these patterns one wants from Lemma
\ref{lem:bijection}.  Indeed, when one computes the vector partitions
and their indices, one finds that these patterns \emph{should}
respectively contribute $1-q^{-1}$, $(1-q^{-1})^{3}$,
$(1-q^{-1})^{3}$.  Since $1-q^{-1} + (1-q^{-1})^{3} + (1-q^{-1})^{3} =
3-7q^{-1}+6q^{-2}-2q^{-3}$ we have perfect agreement.  Note this
computation has simultaneously accounted for (i) the ``vp'' and
``corr'' rows under $4$, (ii) the ``vp'' row under $6$, (iii) the ``corr'' row under $46$, and (iv)
the ``corr'' row under $245$ for those patterns in $245$ of the form
$rv_{2} + sv_{4} + tv_{5}$ with $r=t$.  

The remaining computations to complete Table \ref{tab:cones} are
entirely similar.  The most complicated case to check is the cone
$245$.  There the correction to the pattern corresponding to $av_{2} +
bv_{4} + cv_{5}$ depends on whether $a<c$, $a=c$, or $a>c$; as we saw
above we have already accounted for $a=c$.

\newcommand{\m}{$\bullet$}
\begin{table}[htb]
\begin{center}
\begin{tabular}{|c||c|c|c|c|c|c|c|c|c|c|c|c|}
\hline
&4&6&24&26&45&46&56&245&246&256&456&2456\\
\hline
vp&\m&\m &\m &\m &\m &&\m & \m & &\m &&\\
\hline
corr&\m &&\m &&\m &\m &&\m &\m &&\m&\m\\
\hline
\end{tabular}
\end{center}
\caption{Patterns where either $\hat {H}$
doesn't agree with the contribution computed from the bijection in
Lemma \ref{lem:bijection} (indicated by ``vp'') or where $H\not =
\hat {H}$ (indicated by ``corr'').\label{tab:cones}}
\end{table}
\end{proof}

\bibliographystyle{amsplain_initials_eprint}
\bibliography{g2conjecture}

\end{document}

Topics for introduction:
\begin{enumerate}
\item Tokuyama's formula \cite{tokuyama} for the characters of
representations of $\GL_{r}$.
\item Existing generalizations of his formula:
\begin{enumerate}
\item Type $C$ (Hamel--King \cite{hamel.king})
\item Type $D$ (Chinta--Gunnells \cite{d4conjecture})
\item Type $B$ (Gunnells \cite{brconjecture} and Friedberg--Zhang \cite{fz})
\end{enumerate}
\item Connection between Tokuyama's formula and Littelmann patterns \cite{littelmann},
and what the latter has to do with crystal graphs.  
\item Gindikin--Karpelevi\v c formula for $p$-adic groups.  
\item Multiple Dirichlet series and their $p$-parts.  Metaplectic
generalizations of Tokuyama's formula.  (We have to interpret
``character'' in this case to mean giving an expression for the
$p$-part of a multiple Dirichlet series.  Many of these formulas are
conjectural.)
\begin{enumerate}
\item type $D$ (Chinta--Gunnells),
\item type $B$, $n$ odd (Gunnells, unpublished),
\item type $B$, $n$ even (BBCFG),
\item type $C$, $n$ odd, (Beineke--Brubaker--Frechette),
\item type $C$, $n$ even (Brubaker, unpublished).
\end{enumerate}
\end{enumerate}

In this paper we treat the simplest exceptional example, namely
$G_{2}$.  Our results:
\begin{enumerate}
\item Here we give a conjecture for a $G_2$-analogue of Tokuyama's deformed
character formula. More precisely, let $\Phi$ be the root system of
type $G_2$. We take $\alpha_1,\alpha_2$ to be the simple roots with
$\alpha_1$ short and $\alpha_2$ long. Let $\varpi_1,\varpi_2$ be the
fundamental weights. Given a pair of nonnegative integers
$\ell=(\ell_1,\ell_2)$, we have an irreducible representation of
highest weight $\ell_1\varpi_1+\ell_2\varpi_2$. Let $\chi_\ell$ be the
character; we write $\chi_\ell$ as a polynomial in two variables
$x,y$, and we normalize things so that the polynomial lives in the
group ring of the root lattice. The variable $x$ corresponds to
$\alpha_1$ and $y$ to $\alpha_2$. We also make the change of variables
$x\mapsto qx, y\mapsto qy$, so the monomial $x^uy^v$ becomes
$q^{u+v}x^uy^v$ after the variable change.

Further, let $D$ be the deformed Weyl denominator, so we have \[ D =
(1-x)(1-y)(1-qxy)(1-q^2x^2y)(1-q^3x^3y)(1-q^4x^3y^2).  \] Essentially,
we are looking to describe the coefficients of the polynomial $N_\ell
= D\chi_\ell$, which are themselves polynomials in the deformation
parameter $q$. As in Tokuyama's work, we compute these polynomials
combinatorially by extracting statistics from Littelmann patterns.

\item We prove that the Gindikin--Karpelevi\v c integral can be
evaluated as a sum over the infinite crystal $\BBB (\infty)$.  The
contribution of each vertex is computed using our conjecture.  
\item We give a conjectural  expression for the $\p$-part of the
$G_{2}$ multiple Dirichlet series.  
\end{enumerate}